\newtheorem{theorem}{Theorem}
\newtheorem{definition}{Definition}
\begin{document}
\title{A RAIKOV-TYPE THEOREM FOR RADIAL POISSON DISTRIBUTIONS:\\
 A PROOF OF KINGMAN'S CONJECTURE.}
\author{Thu Van Nguyen}
\address{Department of Mathematics;
         International University, HCM City;
         No.6 Linh Trung ward, Thu Duc District, HCM City;
         Email: nvthu@hcmiu.edu.vn}
\date{March 24, 2011}

\begin{abstract}  In the present paper we prove the following conjecture in Kingman, J.F.C., Random walks with spherical symmetry, Acta Math.,109, (1963), P. 11-53,
 concerning a famous Raikov's theorem of decomposition of Poisson random variables: 
 "If a radial sum of two independent random variables X and Y is radial Poisson, then each of them must be radial Poisson."
\end{abstract}
 
\maketitle{Keywords and phrases: radial characteristic functions; radial Poisson distribution.

AMS2000 subject classification: 60B99, 60E07, 60E99.}
 \vskip 1cm
 \section{Introduction, Notations and Preliminaries}\label{S:intro}
It is well-known that  for each $s\ge -\frac{1}{2}$ the Bessel function $\Lambda_s(.)$ defined by
\begin{equation}\label{eq:Lam}
 \Lambda_s(x)= \Gamma(s+1) J_{s}(x)/(\frac{1}{2}x)^{s},
\end{equation}
where $J_s(x)$ denotes the Bessel function of the first kind,
\begin{equation}\label{eq:Bessel}
J_s(x):= \Sigma_{j=0}^{\infty} \frac{(-1)^j(x/2)^{\nu+2j}}{j!\Gamma(\nu+j+1)}
\end{equation}
can be used as a kernel of {\it radial charachteristic functions} (rad.ch.f.'s) which possesses many of the properties usually associated with the familiar univariate charachteristic functions (ch.f.'s). In particular, in terms of Bessel functions, one can introduce important classes of probability distributions in the context of the Kingman convolutions such as {\it radial Gaussian, radial Poisson distributions, etc.   

  Let $\mathcal P:=\mathcal P(\mathbb R^+)$  denote the set of all probability measures (p.m.'s) on the positive half-line   $\mathbb R^+$ equipped with the weak convergence. The Kingman convolution (cf. Kingman \cite{Ki}, Urbanik \cite{U1}) is defined as follows. For each continuous bounded function f on $\mathbb R^{+}$
  we put
 \begin{multline}\label{astKi}
\int_{0}^{\infty}f(x)\mu\ast_{1,\delta}\nu(dx)=\frac{\Gamma(s+1)}{\sqrt{\pi}\Gamma(s+\frac{1}{2})}\\
\int_{0}^{\infty}\int_{0}^{\infty}\int_{-1}^{1}f((x^2+2uxy+y^2)^{1/2})(1-u^2)^{s-1/2}\mu(dx)\nu(dy)du,
\end{multline}
  where  $ \mu\mbox{ and }\nu\in\mathcal P\mbox{ and }\delta=2(s+1)\geq1$ . In the sequel, for the sake of simplicity, we will denote  $\ast_{1,\delta}=\ast_s.$ The convolution algebra $(\mathcal{P},\ast_s)$ is
 the most important example of Urbanik convolution algebras. In language of the
  Urbanik convolution algebras, the {\it characteristic measure}, say $\sigma_s$, of the Kingman convolution
  has the Rayleigh density
  \begin{equation}\label{Ray}
  d\sigma_s(y)= \frac{2{(s+1)^{s+1}}}{\Gamma(s+1)}y^{2s+1}\exp{(-(s+1)y^2)}dy
  \end{equation}
with the characteristic exponent $\varkappa=2.$    It is known (cf. Kingman \cite{Ki}, Theorem 1), that the kernel $\Lambda_s$ itself is an ordinary ch.f. of a symmetric p.m., say $F_s$, defined on the
interval [-1,1]. Thus, if $\theta_s$ denotes a random variable (r.v.) with distribution
$F_s$ then for each $t\in \mathbb R^+$,
\begin{equation}\label{eq:LamThe}
\Lambda_s(t)= E\exp{(it\theta_s)}=\int_{-1}^1\cos{(tx)}dF_s(x).\end{equation}
 Suppose that $X$ is a nonnegative r.v. with distribution $\mu\in\mathcal{P}$
 and $X$ is independent of $\theta_s$.  The {\it radial characteristic function}
 (rad.ch.f.) of $\mu$, denoted by $\hat\mu(t),$ is defined by
  \begin{equation}\label{ra.ch.f.}
\hat\mu(t) = E\exp{(itX\theta_s)} = \int_0^{\infty}
\Lambda_s(tx)\mu(dx),
\end{equation}
 for every $t\in \mathbb R^{+}$.
 The characteristic measure of the Kingman convolution $\ast_s$, denoted by $\sigma_s$,
has the Maxwell density function
\begin{equation}\label{Maxwell density}
\frac{d\sigma_s(x)}{dx}=\frac{2(s+1)^{s+1}}{\Gamma(s+1)}x^{2s+1}exp\{-(s+1)x^2\}, \quad(0<x<\infty).
\end{equation}
and the rad.ch.f.
\begin{equation}
\hat\sigma_s(t)=exp\{-t^2/4(s+1)\}.
\end{equation}
\section{Transforms $\tau_s, s\ge -\frac{1}{2}$}
 Let us denote by ${\mathbf S}$ the class of all symmetric p.m.'s on ${\mathbb R}$. Obviously, this class of p.m.'s is closed with respect to the ordinary convolution $\star$ and convex combinations of p.m.'s and the weak convergence.
 Define transforms  $\tau_s: \mathcal P\to {\mathbf S},\;  s\ge -\frac{1}{2}$ as follows
\begin{equation}\label{tau_s transform}
 \tau_s(\mu)(E)=\int_0^{\infty}T_c(F_{s})(E) \mu(dc),
 \end{equation}
 where $c\ge0\mbox{ and } E\subset \mathbb {R}\mbox{ and for a p.m. }\mu\in \mathbb{R}$
 $$T_c\mu(E)=\mu(c^{-1}E).$$
 By Sonine's first finite integral for Bessel function (\cite{Wat}, p. 373), for $s>v\ge-\frac{1}{2}$ we have
 \begin{equation} 
 \Lambda_s(t)=\frac{2\Gamma(s+1)}{\Gamma(v+1)\Gamma(s-v))}\int_0^1x^{2v+1}(1-x^2)^{s-v-1}\Lambda_v(tx)dx,
 \end{equation}
 which implies that the transform $\tau_s$ is subordinate to $\tau_v$. In particular, each transform $\tau_s, \tau\ge-\frac{1}{2}$ is subordinate to $\tau_{-\frac{1}{2}}$ which is defined via the cosine convolution. Specificly, for $s>v\ge-\frac{1}{2}\mbox{ and } \mu\in \mathcal P,$ %and for every bounded continuous function $f $ 
 \begin{eqnarray}
 \hat{\mu}(t)&=&\int_0^{\infty}\Lambda_s(tx)\mu(dx)=\int_0^{\infty}cos(tx)(\tau_s\mu)(dx)\notag\\ 
 {\empty}&=&\int_0^{\infty}\frac{2\Gamma(s+1)}{\Gamma(v+1)\Gamma(s-v)}\int_0^1u^{2v+1}(1-u^2)^{s-v-1}\Lambda_v(tux)du\; \mu(dx)\\
 {\empty}&=&\int_0^{\infty}\Lambda_v(tx)[\frac{2\Gamma(s+1)}{\Gamma(v+1)\Gamma(s-v)}\int_0^1u^{2v+1}(1-u^2)^{s-v-1} \mu(dx/u)du]\notag \\  
 {\empty}&=&\int_0^{\infty}cos(tx)(\tau_v\nu)(dx), \notag                                                        
 \end{eqnarray}
 where $\nu(dx):=\frac{2\Gamma(s+1)}{\Gamma(v+1)\Gamma(s-v)}\int_0^1u^{2v+1}(1-u^2)^{s-v-1} \mu(dx/u)du$, which implies that for every bounded continuous function on $\mathbb R^+$
 \begin{multline}\label{relationtrans}
 \int_0^{\infty}f(x)(\tau_s\mu)(dx)=\\
 \frac{2\Gamma(s+1)}{\Gamma(v+1)\Gamma(s-v)}
 \int_0^{\infty}f(x)\int_0^1(\tau_v\mu)(dx/u)u^{2v+1}(1-u^2)^{s-v-1}du\}.
 \end{multline}
 
    Let $\mathbf S_s$ be a subclass of $\mathbf S$ consisted of p.m.'s
of the form (\ref {tau_s transform}). By (\ref {tau_s transform}) and by Fourier transforms and rad.ch.f.'s and, by a similar proof of Proposition 1b in Bingham \cite{B1}, p. 176.
\begin{theorem}\label{property of homeomorphism}
For any $\mu, \nu\in \mathcal P \mbox{ and } \alpha\ge 0, \beta\ge 0, \alpha + \beta =1,$
\begin{eqnarray}\label{propert y of  F_s}
\mathcal \tau_s(\alpha\mu + \beta\nu )&=&\alpha \tau_s(\mu)+\beta \tau_s(\nu)\\
\mathcal \tau_s(\mu\ast_{s}\nu)&=&( \tau_s\mu)\star(\tau_s\nu)\\
 \tau_s(\sigma_s)&=&N(0, 2(s+1)).
\end{eqnarray}
where the $\star$ denotes the ordinary convolution.

 Hence, it follows that each pair $( \mathbf S_s, \ast,  s\ge -1/2),$ is a topological semigroup of symmetric p.m.'s with the unit element $\delta_0.$
Moreover, the map $\mathcal \tau_s$ stands for a homeomorphism between the two convolution algebras.
$(\mathcal P, \ast_s)\mbox{ and }(\mathbf S_s, \star).$ 
Finally, by virtue of (\ref{relationtrans}) the family $ {\mathbf S}_s, s\ge -\frac{1}{2}$  is monotone decreasing in s. The sets  ${\mathbf S}_s$ are continuous in s in that
\begin{equation}\label{Cont Moni}
\cap_{0\le u<s} {\mathbf S}_u=  {\mathbf S}_s, \quad \cup_{s<v\le\infty}{\mathbf S}_v={\mathbf S}_s, \quad (s\in[0, \infty]).
\end{equation}
   \end{theorem} 
  \section{Raikov's type theorem for radial Poisson distributions}
  
 Given nonnegative r.v.'s  $X\mbox{ and }Y$ with the corresponding distributions $\mu\mbox{ and }\nu$ such that $X, Y\mbox{ and }\theta_s$ are independent. Following Kingman (\cite{Ki}, p.19) we put
 \begin{equation}\label{rad.sum}
 X\oplus_sY:\overset{d}{=}\sqrt{X^2+Y^2+2XY\theta_s}
\end{equation} 
and call it (and any one of the equivelent r.v.'s $X\oplus_sY$) a {\it radial sum} of X and Y. It should be noted that
$$\mu\ast_s\nu\overset{d}{=}(X\oplus_sY).$$
In a recent paper \cite{Ng5} a multi-dimensional analogue of the Cram\'er-L\'evy theorem was obtained (see also Urbanik \cite{U2} for one-dimensional case). For the sake of simplicity, we state below the univariate version. 
\begin{theorem} 
Let $X\mbox{ and }Y$ be nonnegative independent r.v.'s such that  
  \begin{equation}\label {radial sum 2}
  \sigma_s\overset{d}{=}(X\oplus_sY).
  \end{equation}
Then, $X=\alpha X_1\mbox{ and } Y=\beta Y_1$ for some nonnegative constants  $\alpha\mbox{ and } \beta\mbox{ with } \alpha^2+\beta^2=1\mbox{ and }\sigma_s\overset{d}{=}X_1\overset {d}{=}X_2.$
\end{theorem}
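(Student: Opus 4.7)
The plan is to lift the problem to the line via the transform $\tau_s$ from Theorem~\ref{property of homeomorphism} and then invoke the classical Cram\'er--L\'evy theorem, which says that a Gaussian law on $\mathbb R$ has only Gaussian factors under the ordinary convolution $\star$.

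First I would denote by $\mu$ and $\nu$ the laws of $Y$ and $Z$, so that the hypothesis $\sigma_s \stackrel{d}{=} Y\oplus_s Z$ becomes $\sigma_s = \mu \ast_s \nu$ in $(\mathcal P, \ast_s)$. Applying $\tau_s$ to both sides and using the multiplicativity $\tau_s(\mu\ast_s\nu) = \tau_s(\mu)\star\tau_s(\nu)$ together with $\tau_s(\sigma_s)=N(0,2(s+1))$ from Theorem~\ref{property of homeomorphism}, I obtain
\begin{equation*}
N(0,2(s+1)) \;=\; \tau_s(\mu) \star \tau_s(\nu),
\end{equation*}
an identity between symmetric p.m.'s on $\mathbb R$ (note that $\tau_s(\mu),\tau_s(\nu)\in\mathbf S_s\subset\mathbf S$).

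Next I would apply the classical Cram\'er--L\'evy theorem to this identity: both $\tau_s(\mu)$ and $\tau_s(\nu)$ must be (centered) Gaussian laws, say $\tau_s(\mu)=N(0,\sigma_1^2)$ and $\tau_s(\nu)=N(0,\sigma_2^2)$, with the variance identity $\sigma_1^2+\sigma_2^2 = 2(s+1)$.

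The remaining step is to pull these Gaussian identities back through $\tau_s$ using scaling. A short computation from the defining formula (\ref{tau_s transform}) shows that $\tau_s$ is equivariant under dilations: $\tau_s(T_\alpha G)=T_\alpha \tau_s(G)$ for every $\alpha\ge 0$ and $G\in\mathcal P$. Setting $\alpha = \sigma_1/\sqrt{2(s+1)}$ and $\beta = \sigma_2/\sqrt{2(s+1)}$, one has $\alpha^2+\beta^2=1$ and
\begin{equation*}
\tau_s(T_\alpha \sigma_s) \;=\; T_\alpha \tau_s(\sigma_s) \;=\; T_\alpha N(0,2(s+1)) \;=\; N(0,\sigma_1^2) \;=\; \tau_s(\mu),
\end{equation*}
and analogously for $\nu$. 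Since $\tau_s$ is a homeomorphism (hence injective) by Theorem~\ref{property of homeomorphism}, this yields $\mu = T_\alpha \sigma_s$ and $\nu = T_\beta \sigma_s$, i.e.\ $Y\stackrel{d}{=}\alpha Y_1$ and $Z\stackrel{d}{=}\beta Y_2$ with $Y_1,Y_2\sim\sigma_s$, as required.

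The only genuinely nontrivial point is the invocation of the classical Cram\'er--L\'evy theorem; the rest is bookkeeping. The secondary point to verify carefully is the scaling equivariance of $\tau_s$, which is not stated in Theorem~\ref{property of homeomorphism} but is a direct consequence of its definition. If one wished to avoid appealing to the companion paper \cite{Ng5} for a separate multidimensional Cram\'er--L\'evy theorem, the above two-line reduction through $\tau_s$ makes the univariate case self-contained modulo the classical result.
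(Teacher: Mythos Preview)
Your argument is correct. Note, however, that the paper does not actually give a proof of this theorem: it is stated with a reference to \cite{Ng5} (and to Urbanik \cite{U2} for the one-dimensional case), so there is no in-text proof to compare against directly.

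That said, your route is exactly the $\tau_s$-transfer method the paper uses for its main result (Theorem~\ref{Raikov-type theorem}): push the factorization through $\tau_s$ to the line, apply the classical theorem there (Cram\'er--L\'evy in your case, Raikov in the paper's), and pull back using injectivity of $\tau_s$. Your observation that the factors $\tau_s(\mu),\tau_s(\nu)$ lie in $\mathbf S_s\subset\mathbf S$, hence are symmetric, is what forces the Gaussian factors to be centered and makes the scaling identification clean. The scaling equivariance $\tau_s(T_\alpha G)=T_\alpha\tau_s(G)$ is indeed immediate from the definition~(\ref{tau_s transform}) via the substitution $c\mapsto \alpha c$. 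So your proof is self-contained modulo the classical Cram\'er--L\'evy theorem, and in fact gives a shorter path to the univariate statement than going through the multidimensional result of \cite{Ng5}.
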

It is remarkable that Rayleigh distributions share important properties with ordinary normal distributions. Our further aim is to study a similar case for radial Poisson distributions which are defined as follows.
 \begin{definition}\label{radial Poisson}(cf.Kingman \cite{{Ki}}, P. 33-34)
 A r.v. X or its distribution  $\pi$ is said to be {\it radial Poisson}, or precisely, {\it s-radial Poisson},  if its rad.ch.f. is of the form
 \begin{eqnarray}\label{rad.Poisson ch.f.}
\hat{\pi}(t)&=&E(\Lambda_s(tX))=\exp a\{\Lambda_s(ct)-1\}\quad(a, c>0\mbox{ and } t\ge 0)\\
\label{repres Poisson}
X&\overset{d}{=}&(X_1\oplus_sX_2\oplus_s\ldots\oplus_s X_N)
 \end{eqnarray} where all the $X_i$ are equal to c with probability one, and $N$ has a Poisson distribution with mean $a, \mbox{ and r.v.'s  }N, X_i, i=1, 2, \ldots $
 are independent.
 \end{definition}
 By virtue of (\ref{rad.Poisson ch.f.}, \ref{repres Poisson}) it follows that each radal Poisson distribution defined above is uniquely determined
  by a and $ c, a=EN\mbox{ and }c$ is a scale parameter and will be denoted by $\pi(s, a, c)$.
If $s=-\frac{1}{2}\mbox{ and }c=1\mbox{ we have } \ast_s=\star\mbox{ and }\pi_{-\frac{1}{2}}$ becomes an ordinary Poisson distribution.
 The distribution has atoms at 0 and at c, together with an absolutely continuous component in $x>0.$
 \begin{definition}\label{symm Poisson}
  A symmetric distribution $\nu\in \mathbf {S}$ is called {\it symmetric Poisson}, if its ch.f. (Fourier transform) is of the form
\begin{equation}\label{symmetric Poisson}
\mathcal F_{\nu}(t)= \exp a\{\cos(ct)-1\}\quad(a, c>0\mbox{ and } t\ge 0).
\end{equation}
 \end{definition}
 
  A question posed by Kingman (\cite{Ki}, P. 34) has been existing for many years: {\it" It would be interesting to prove for this distribution analogues of some of the well-known results (such as Raikov's theorem, \cite{Lu}, P.174) about the Poisson distribution".} 
  Our main aim in this paper is to give a complete answer to Kingman's question.
  \begin{theorem}\label{Raikov-type theorem}
  Suppose, $X, Y, \mbox{ and }Z$ are nonnegative independent r.v.'s such that $\pi(s, a, c)\overset{d}{=}X$ and the following equation holds
  \begin{equation}\label{radial sum}
  X\overset{d}{=}(Y\oplus_sZ).
  \end{equation}
  Then, there exist nonnegative numbers $a_1\mbox{ and }a_2\mbox{ such that }a=a_1+a_2$ and r.v.'s $X\mbox{ and }Y$ have radial Poisson distributions $\pi(s, a_1, c)\mbox{ and }\pi(s, a_2, c)$, respectively.
  \end{theorem}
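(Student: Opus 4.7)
The plan is to port the radial-convolution hypothesis to the ordinary convolution on $\mathbb R$ through the semigroup isomorphism $\tau_s$ of Theorem~\ref{property of homeomorphism}, apply a classical Raikov-type theorem there, and then pull back. Writing $\mu,\nu\in\mathcal P$ for the laws of $Y,Z$, the hypothesis $X\overset{d}{=}Y\oplus_sZ$ reads $\pi_s=\mu*_s\nu$, so $\tau_s(\pi_s)=\tau_s(\mu)\star\tau_s(\nu)$. Because the Fourier transform of $\tau_s(G)$ coincides with the rad.ch.f.\ $\hat G(t)=\int_0^\infty\Lambda_s(ct)\,G(dc)$, the left-hand side is precisely the classical compound Poisson law on $\mathbb R$ with intensity $a$ and jump distribution $T_cF_s$: its characteristic function is $\exp\{a(\Lambda_s(ct)-1)\}$ and its L\'evy measure $a\,T_cF_s$ sits on the compact interval $[-c,c]$.

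I would then apply Raikov--Lukacs-type theory to this compactly-supported compound Poisson factorization. Since $\exp\{a(\Lambda_s(ct)-1)\}>0$ on $\mathbb R$, the real-valued factor ch.f.'s $\widehat{\tau_s\mu}$ and $\widehat{\tau_s\nu}$ (both equal $1$ at $0$) are forced by continuity and the sign of their product to remain strictly positive on the whole line, so their logarithms exist globally. Compound Poisson laws with compactly supported L\'evy measure belong to the Linnik class $I_0$; consequently both $\tau_s\mu$ and $\tau_s\nu$ are themselves compound Poisson, with symmetric nonnegative L\'evy measures $\Lambda_1,\Lambda_2$ satisfying $\Lambda_1+\Lambda_2=a\,T_cF_s$. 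The decisive step is then to use the rigidity of the class $\mathbf S_s$: every element of $\mathbf S_s$ is a mixture $\int T_dF_s\,G(dd)$, and among compound Poisson distributions the only ones admitting such a representation are the images under $\tau_s$ of radial Poisson distributions, namely those whose L\'evy measure is a positive scalar multiple of a single $T_dF_s$. This forces $\Lambda_i=a_iT_cF_s$ with $a_1+a_2=a$, whence
\begin{equation*}
\hat\mu(t)=\exp\{a_1(\Lambda_s(ct)-1)\},\qquad\hat\nu(t)=\exp\{a_2(\Lambda_s(ct)-1)\},
\end{equation*}
and uniqueness of rad.ch.f.'s gives that $Y$ and $Z$ are radial Poisson.

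The central obstacle is this last rigidity statement: ruling out a sub-measure of $a\,T_cF_s$ which is not a scalar multiple of $T_cF_s$ itself from arising, through compound Poissonization, as an element of $\mathbf S_s$. Proving it requires the specific analytic structure of the Bessel kernel $\Lambda_s$---its distribution of real zeros and entire-function growth---together with the fact that $\mathbf S_s$-membership couples the radial variable to the mixing variable only through the one-parameter family $\{T_cF_s\}_{c\ge 0}$. Without this rigidity one recovers only that $Y$ and $Z$ are infinitely divisible, which falls well short of the radial Poisson conclusion sought by Kingman.
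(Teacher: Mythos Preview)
Your transport-via-$\tau_s$ opening is exactly the paper's, but the two routes then diverge. The paper never invokes the Linnik class $I_0$ or any rigidity of $\mathbf S_s$. It first isolates the case $s=-\tfrac12$: there $\tau_{-1/2}$ is plain symmetrization, $\tau_{-1/2}(\pi_{-1/2})$ is the symmetric Poisson law on $\mathbb R$ with characteristic function $\exp\{a(\cos ct-1)\}$, and the classical Raikov theorem is applied to obtain a ``Raikov theorem for symmetric Poisson distributions''. For general $s>-\tfrac12$ the paper transports $\pi_s=G_1*_sG_2$ to $\tau_s(\pi_s)=\tau_s(G_1)\star\tau_s(G_2)$ and then invokes that symmetric-Poisson Raikov result directly, reading the conclusion back through the identity $\hat G(t)=\mathcal F_{\tau_s(G)}(t)$. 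No compound-Poisson factorisation theory and no rigidity lemma appear.

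Your proposal carries a genuine gap, and it is not the one you flag. The rigidity assertion you call ``decisive'' is false as stated: the compound Poisson laws lying in $\mathbf S_s$ are precisely the $\tau_s$-images of $*_s$-compound-Poisson laws, and their classical L\'evy measures are the \emph{general} mixtures $\int_0^\infty T_dF_s\,M(dd)$ for arbitrary finite $M$ on $(0,\infty)$, not merely scalar multiples of a single $T_cF_s$. What would actually force $M_i=a_i\delta_c$ is the combination of (i) the additive constraint $\Lambda_1+\Lambda_2=aT_cF_s$ from the factorisation, (ii) the representations $\Lambda_i=\int T_dF_s\,M_i(dd)$ coming from membership of $\tau_s\mu,\tau_s\nu$ in $\mathbf S_s$, and (iii) injectivity of the map $M\mapsto\int T_dF_s\,M(dd)$, which is nothing more than uniqueness of rad.ch.f.'s; no analysis of the zeros or entire-function growth of $\Lambda_s$ is needed. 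Separately, your claim that a compound Poisson law with compactly supported L\'evy measure automatically lies in $I_0$---so that arbitrary convolution factors are forced to be infinitely divisible and hence compound Poisson with sub-L\'evy-measures---is not a standard theorem and would itself require proof; the paper's argument avoids this step entirely.
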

  \begin{proof}  First, observe that if $X\overset{d}{=}\pi(s, a, c), \mbox{ then } X/c\overset{d}{=}\pi(s, a, 1)$. Therefore, without loss of generality, one may assume that $c=1$.   
Next, consider the case $s=-\frac{1}{2}.$ It is evident that the Urbanik's symmetric convolution  $\ast_{1, 1}$ (cf. Urbanik\cite{U1}) is identical with the Kingman convolution $\ast_{-\frac{1}{2}}$ or cosine convolution. Since,  the transform $\tau_{-\frac{1}{2}}$ is the symmetrization of p.m.'s  in $\in \mathcal P,$ it follows that every symmetric distribution  $H\in \mathbb R$ is uniquely represented by\ 
\begin{equation}\label{Sym.p.m.}
 H=\tau_{-\frac{1}{2}}(G),\quad G\in\mathcal P
\end{equation}
which implies that $\mathbf {S}=\{\tau_{-\frac{1}{2}}(G), G\in \mathcal P\}.$

Moreover, the rad.ch.f. of a distribution $G\in(\mathcal{P},\ast_{-\frac{1}{2}})$ is the same as the Fourier transform of its symmetrisation, that is
\begin{equation}\label{rad-Fourier}
\hat{G}(t)=\mathcal F_{\tau_{-\frac{1}{2}}(G)}(t)\quad t\ge 0. 
\end{equation}
In particular, if $G=\pi(-\frac{1}{2}, a, c)$ the Equation (\ref {rad-Fourier}) reads
\begin{equation*}
    \hat{\pi}(-\frac{1}{2}, a, c)(t)=\int_{-1}^1\cos(ut) \pi(-\frac{1}{2}, a, c) (du)= \exp a\{\cos(ct)-1\}\quad(a, c>0\mbox{ and } t\ge 0)
    \end{equation*}
    which shows that $\tau_{-\frac{1}{2}}(\pi(-\frac{1}{2}, a, c) )$ is a symmetric Poisson distribution.

 Suppose now that for symmetric distributions $\tau_{-\frac{1}{2}}(G_i), i=1, 2$ we have
 \begin{equation*}
\tau_{-\frac{1}{2}}(\pi(-\frac{1}{2}, a, c))=\tau_{-\frac{1}{2}}(G_1)\star\tau_{-\frac{1}{2}}(G_2)
 \end{equation*}
 which implies that
 \begin{equation*}
 \pi(-\frac{1}{2}, a, c)=G_1\ast_{-\frac{1}{2}} G_2 .
 \end{equation*}

Proceedings successively, by virtue of the classical Raikov's decomposition theorem (\cite{Lu}, P.174)  we infer that the distributions $G_1\mbox{ and }G_2$ are both Poisson distributions and, consequently, the symmetric distributions $\tau_{-\frac{1}{2}}(G_1)\mbox{ and }\tau_{-\frac{1}{2}}(G_2)$ are both symmetric Poisson distributions which shows that the Raikov's decomposition theorem holds true also for symmetric distributions on $\mathbb R.$ Now, by (\ref{symmetric Poisson}, \ref{rad-Fourier}) it follows that, Theorem \ref{Raikov-type theorem} is true for the Kingman convolution algebra $(\mathcal P, \ast_{-\frac{1}{2}}).$  In general, let $\pi(s, a, c)$ be a radial Poisson distribution with the rad.ch.f. given by the equation (\ref{rad.Poisson ch.f.}) and the decomposition (\ref{radial sum}) holds. Let $G_1\mbox{ and }G_2 $ be distributions of the r.v.'s $Y\mbox{ and }Z$, respectively.
Then we have 
$$\pi(s, a, c)=G_1\ast_sG_2.$$
Applying the transform (\ref{tau_s transform}) to both sides of the above equation one get
$$\tau_s(\pi(s, a, c))=\tau_s(G_1)\star \tau_s(G_2)$$
which by virtue of the Raikov's theorem for symmetric Poisson distributions implies that the distributions $\tau_s(G_1)\mbox{ and } \tau_s(G_2)$ are symmetric Poisson distributions and consequently, since for each $s\ge -\frac{1}{2}\mbox{ and for each }G_i\in \mathcal P$
\begin{equation*}
\hat{G_i}(t)=\mathcal F_{\tau_s(G_i)}(t)= \exp a_i\{\Lambda_s(ct)-1\}\quad(a_i, c>0, i=1, 2\mbox{ and } t\ge 0)
\end{equation*}
we have $G_i=\pi(s, a_i, c), a=a_1+a_2$ and the equation (\ref{radial sum}) holds. 
\end{proof}

\end{document}